\date{\today}
\newtheorem{theorem}{Theorem}
\newtheorem{corollary}[theorem]{Corollary}
\theoremstyle{definition}
\newtheorem{definition}[theorem]{Definition}
\begin{document}

\title[Symmetric inverse topological
semigroups of finite rank $\leqslant n$] {Symmetric inverse
topological semigroups\\ of finite rank $\leqslant n$}
\author{Oleg~Gutik}
\address{Department of Mechanics and Mathematics, Ivan Franko Lviv
National University, Universytetska 1, Lviv, 79000, Ukraine}
\email{o\_\,gutik@franko.lviv.ua, ovgutik@yahoo.com}

\author{Andriy~Reiter}
\address{Department of Mechanics and Mathematics, Ivan Franko Lviv
National University, Universytetska 1, Lviv, 79000, Ukraine}
\email{reiter@i.ua, reiter\_\,andriy@yahoo.com}

\keywords{Topological semigroup, topological inverse semigroup,
symmetric inverse semigroup of finite transformations,
algebraically $h$-closed semigroup, absolutely $H$-closed
semigroup, $H$-closed semigroup, annihilating semigroup
homomorphism, Bohr compactification}

\subjclass[2000]{Primary 22A15, 20M20. Secondary 20M18, 54H15}

\begin{abstract}
We establish topological properties of the symmetric inverse
topological semigroup of finite transformations
$\mathscr{I}_\lambda^n$ of the rank $\leqslant n$. We show that
the topological inverse semigroup $\mathscr{I}_\lambda^n$ is
algebraically $h$-closed in the class of topological inverse
semigroups. Also we prove that a topological semigroup $S$ with
countably compact square $S\times S$ does not contain the
semigroup $\mathscr{I}_\lambda^n$ for infinite cardinal $\lambda$
and show that the Bohr compactification of an infinite topological
symmetric inverse semigroup of finite transformations
$\mathscr{I}_\lambda^n$ of the rank $\leqslant n$ is the trivial
semigroup.
\end{abstract}

\maketitle



In this paper all topological spaces will be assumed to be
Hausdorff. We shall follow the terminology of~\cite{CHK, CP,
Engelking1989, Petrich1984}. If $A$ is a subset of a topological
space $X$, then we denote the closure of the set $A$ in $X$ by
$\operatorname{cl}_X(A)$. By $\omega$ we denote the first infinite
cardinal.

A semigroup $S$ is called an \emph{inverse semigroup} if every $a$
in $S$ possesses an unique inverse, i.e. if there exists an unique
element $a^{-1}$ in $S$ such that
\begin{equation*}
    aa^{-1}a=a \qquad \mbox{and} \qquad a^{-1}aa^{-1}=a^{-1}.
\end{equation*}
A map which associates to any element of an inverse semigroup its
inverse is called the \emph{inversion}.

A {\it topological} ({\it inverse}) {\it semigroup} is a
topological space together with a continuous semigroup operation
(and an~inversion, respectively). Obviously, the inversion defined
on a topological inverse semigroup is a homeomorphism. If $S$ is
a~semigroup (an~inverse semigroup) and $\tau$ is a topology on $S$
such that $(S,\tau)$ is a topological (inverse) semigroup, then we
shall call $\tau$ a \emph{semigroup} (\emph{inverse})
\emph{topology} on $S$.

If $S$ is a~semigroup, then by $E(S)$ we denote the band (the
subset of all idempotents) of $S$. On  the set of idempotents
$E(S)$ there exists a natural partial order: $e\leqslant f$
\emph{if and only if} $ef=fe=e$.

Let $X$ be a set of cardinality $\lambda\geqslant 1$. Without loss
of generality we can identify the set $X$ with the cardinal
$\lambda$. A function $\alpha$ mapping a subset $Y$ of $X$ into
$X$ is called a \emph{partial transformation} of $X$. In this case
the set $Y$ is called the \emph{domain} of $\alpha$ and is denoted
by $\operatorname{dom}\alpha$. Also, the set $\{ x\in X\mid
y\alpha=x \mbox{ for some } y\in Y\}$ is called the \emph{range}
of $\alpha$ and is denoted by $\operatorname{ran}\alpha$. The
cardinality of $\operatorname{ran}\alpha$ is called the
\emph{rank} of $\alpha$ and denoted by
$\operatorname{rank}\alpha$. For convenience we denote by
$\varnothing$ the empty transformation, that is a partial mapping
with
$\operatorname{dom}\varnothing=\operatorname{ran}\varnothing=\varnothing$.

Let $\mathscr{I}(X)$ denote the set of all partial one-to-one
transformations of $X$ together with the following semigroup
operation:
\begin{equation*}
    x(\alpha\beta)=(x\alpha)\beta \quad \mbox{if} \quad
    x\in\operatorname{dom}(\alpha\beta)=\{
    y\in\operatorname{dom}\alpha\mid
    y\alpha\in\operatorname{dom}\beta\}, \qquad \mbox{for} \quad
    \alpha,\beta\in\mathscr{I}(X).
\end{equation*}
The semigroup $\mathscr{I}(X)$ is called the \emph{symmetric
inverse semigroup} over the set $X$~(see \cite{CP}). The symmetric
inverse semigroup was introduced by V.~V.~Wagner~\cite{Wagner1952}
and it plays a major role in the theory of semigroups.

Put
\begin{equation*}
\mathscr{I}_\lambda^\infty=\{ \alpha\in\mathscr{I}(X)\mid
\operatorname{rank}\alpha \;\mbox{ is finite}\}, \quad \mbox{and}
\quad \mathscr{I}_\lambda^n=\{ \alpha\in\mathscr{I}(X)\mid
\operatorname{rank}\alpha\leqslant n\},
\end{equation*}
for $n=1,2,3,\ldots$. Obviously, $\mathscr{I}_\lambda^\infty$ and
$\mathscr{I}_\lambda^n$ ($n=1,2,3,\ldots$) are inverse semigroups,
$\mathscr{I}_\lambda^\infty$ is an ideal of $\mathscr{I}(X)$, and
$\mathscr{I}_\lambda^n$ is an ideal of
$\mathscr{I}_\lambda^\infty$, for each $n=1,2,3,\ldots$. Further,
we shall call the semigroup $\mathscr{I}_\lambda^\infty$ the
\emph{symmetric inverse semigroup of finite transformations} and
$\mathscr{I}_\lambda^n$ the \emph{symmetric inverse semigroup of
finite transformations of the rank $\leqslant n$}. The elements of
semigroups $\mathscr{I}_\lambda^\infty$ and
$\mathscr{I}_\lambda^n$ are called \emph{finite one-to-one
transformations} (\emph{partial bijections}) of the set $X$. By
\begin{equation*}
\left(%
\begin{array}{cccc}
  x_1 & x_2 & \cdots & x_n \\
  y_1 & y_2 & \cdots & y_n \\
\end{array}%
\right)
\end{equation*}
we denote a partial one-to-one transformation which maps $x_1$
onto $y_1$, $x_2$ onto $y_2$, $\ldots$, and $x_n$ onto $y_n$, and
by $0$ the empty transformation. Obviously, in such case we have
$x_i\neq x_j$ and $y_i\neq y_j$ for $i\neq j$
($i,j=1,2,3,\ldots,n$).

Let $\lambda$ be a non-empty cardinal. On the set
 $
 B_{\lambda}=\lambda\times\lambda\cup\{ 0\}
 $,
where $0\notin\lambda\times\lambda$, we define the semigroup
operation ``$\, \cdot\, $'' as follows
\begin{equation*}
(a, b)\cdot(c, d)=
\begin{cases} (a, d), & \text{ if } b=c,\\
0, & \text{ if } b\ne c,
\end{cases}
\end{equation*}
and $(a, b)\cdot 0=0\cdot(a, b)=0\cdot 0=0$ for $a,b,c,d\in
\lambda$. The semigroup $B_{\lambda}$ is called the
\emph{semigroup of $\lambda\times\lambda$-matrix units}~(see
\cite{CP}). Obviously, for any cardinal $\lambda>0$, the semigroup
of $\lambda\times\lambda$-matrix units $B_{\lambda}$ is isomorphic
to $\mathscr{I}_\lambda^1$.

\begin{definition}[\cite{GutikPavlyk2001, Stepp1969}]\label{def1}
Let $\mathfrak{S}$ be a class of topological semigroups.
A~topological semigroup $S\in\mathfrak{S}$ is called
\emph{$H$-closed in the class $\mathfrak{S}$} if $S$ is a closed
subsemigroup of any topological semigroup $T\in{\mathfrak{S}}$
which contains $S$ as a subsemigroup. If $\mathfrak{S}$ coincides
with the class of all topological semigroups, then the semigroup
$S$ is called \emph{$H$-closed}.
\end{definition}

We remark that in~\cite{Stepp1969} $H$-closed semigroups are
called {\it maximal}.

\begin{definition}{\cite{GutikPavlyk2003, Stepp1975}}\label{def2}
Let $\mathfrak{S}$ be a class of topological semigroups.
A~topological semigroup $S\in{\mathfrak{S}}$ is called {\it
absolutely $H$-closed in the class $\mathfrak{S}$} if any
continuous homomorphic image of $S$ into $T\in\mathfrak{S}$ is
$H$-closed in $\mathfrak{S}$. If $\mathfrak{S}$ coincides with the
class of all topological semigroups, then the semigroup $S$ is
called {\it absolutely $H$-closed}.
\end{definition}

\begin{definition}{\cite{GutikPavlyk2003, Stepp1975}}\label{def3}
Let ${\mathfrak{S}}$ be a class of topological semigroups.
A~semigroup $S$ is called {\it algebraically $h$-closed in
${\mathfrak S}$} if $S$ with the discrete topology $\mathfrak{d}$
is absolutely $H$-closed in ${\mathfrak{S}}$ and $(S,
\mathfrak{d})\in{\mathfrak{S}}$. If ${\mathfrak{S}}$ coincides
with the class of all topological semigroups, then the semigroup
$S$ is called {\it algebraically h-closed}.
\end{definition}

Absolutely $H$-closed semigroups and algebraically $h$-closed
semigroups were introduced by Stepp in~\cite{Stepp1975}. There
they were called {\it absolutely maximal} and {\it algebraic
maximal}, respectively.

Gutik and Pavlyk established in \cite{GutikPavlyk2005} topological
properties of infinite topological semigroups of
$\lambda\times\lambda$-matrix units $B_{\lambda}$. They showed
that an infinite topological semigroup of
$\lambda\times\lambda$-matrix units $B_{\lambda}$ does not embed
into a compact topological semigroup, every non-zero element of
$B_{\lambda}$ is an isolated point of $B_{\lambda}$, and
$B_{\lambda}$ is algebraically h-closed in the class of
topological inverse semigroups.

Gutik, Lawson and Repov\v{s} in~\cite{GutikLawsonRepovs2007}
introduced the conception of semigroups with a tight ideal series
and there they investigated their closure in semitopological
semigroups, partially inverse semigroups with continuous
inversion. Also they derived related results about the
nonexistence of (partial) compactifications of topological
semigroups with a tight ideal series. As a corollary they show
that the symmetric inverse semigroup of finite transformations
$\mathscr{I}_\lambda^n$ of the rank $\leqslant n$ is algebraically
closed in the class of inverse (semi)topological semigroups with
continuous inversion. Since semigroups with a tight ideal series
are not preserved by homomorphisms
(\cite[Lemma~19]{GutikLawsonRepovs2007}), naturally arises the
following question: \emph{is the symmetric inverse semigroup of
finite transformations $\mathscr{I}_\lambda^n$ of the rank
$\leqslant n$ is algebraically $h$-closed in the class of
topological inverse semigroups?}

In this paper we shall show that for every infinite cardinal
$\lambda$ the finite symmetric inverse semigroup
$\mathscr{I}_\lambda^n$ of the rank $\leqslant n$ has topological
properties similar to the infinite semigroup of matrix units
$B_{\lambda}$ as a topological semigroup. We show that the
topological inverse semigroup $\mathscr{I}_\lambda^n$ is
algebraically $h$-closed in the class of topological inverse
semigroups. Also we prove that a topological semigroup $S$ with
countably compact square $S\times S$ does not contain the
semigroup $\mathscr{I}_\lambda^n$ for infinite cardinal $\lambda$
and show that the Bohr compactification of an infinite topological
symmetric inverse semigroup of finite transformations
$\mathscr{I}_\lambda^n$ of the rank $\leqslant n$ is the trivial
semigroup.

The main results of this paper were announced in
\cite{ReiterGutik2008}.

\begin{theorem}\label{theorem1}
For any positive integer $n$ the semigroup $\mathscr{I}_\lambda^n$
is algebraically $h$-closed in the class of topological inverse
semigroups.
\end{theorem}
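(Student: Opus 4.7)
\medskip

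\noindent\textbf{Proof plan.} The plan is to proceed by induction on $n$. When $n=1$ we have $\mathscr{I}_\lambda^1\cong B_\lambda$, so the statement is exactly the theorem of Gutik and Pavlyk cited in the introduction.

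For the inductive step, assume $\mathscr{I}_\lambda^{n-1}$ is algebraically $h$-closed in the class of topological inverse semigroups. Fix a continuous homomorphism $h\colon(\mathscr{I}_\lambda^n,\mathfrak{d})\to T$ into a topological inverse semigroup $T$; unwinding Definitions~\ref{def2} and~\ref{def3}, it suffices to show that $h(\mathscr{I}_\lambda^n)$ is closed in $T$. Since $\mathscr{I}_\lambda^{n-1}$ is an ideal of $\mathscr{I}_\lambda^n$, the restriction $h|_{\mathscr{I}_\lambda^{n-1}}$ is again a continuous homomorphism from a discrete inverse semigroup into $T$, and the inductive hypothesis then makes $h(\mathscr{I}_\lambda^{n-1})$ closed in $T$. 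It therefore remains to prove that every point $t\in\operatorname{cl}_T h(\mathscr{I}_\lambda^n)\setminus h(\mathscr{I}_\lambda^{n-1})$ already lies in $h(\mathscr{I}_\lambda^n)$.

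To handle such a $t$, I would choose a net $h(\alpha_i)\to t$ with $\alpha_i\in\mathscr{I}_\lambda^n$ and exploit the continuity of multiplication and inversion in $T$. The set of idempotents $E(T)$ is closed (as the equalizer of the continuous maps $x\mapsto x$ and $x\mapsto xx^{-1}$), and for each partial identity $e_A\in\mathscr{I}_\lambda^n$ the one-sided translations by $h(e_A)$ are continuous on $T$. Consequently, for every pair of $n$-element subsets $A,B\subseteq\lambda$ the ``sandwich'' $h(e_A)\cdot t\cdot h(e_B)$ equals $\lim_i h(e_A\alpha_i e_B)$, and each term of the latter net lies in the \emph{finite} set $h(e_A\mathscr{I}_\lambda^n e_B)$---a homomorphic image of the adjunction $S_n\cup\{0\}$ of the symmetric group. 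Hausdorffness of $T$ forces this sandwich to equal $h(\gamma_{A,B})$ for some $\gamma_{A,B}\in e_A\mathscr{I}_\lambda^n e_B$, and then the partial order on $E(T)$ (closed because inversion is continuous) together with the identity $\alpha=e_{\operatorname{dom}\alpha}\,\alpha\,e_{\operatorname{ran}\alpha}$ valid in $\mathscr{I}_\lambda^n$ allows one to patch these local pieces into a single $\alpha\in\mathscr{I}_\lambda^n$ with $h(\alpha)=t$.

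The principal obstacle I foresee is this patching step: \emph{a priori} the net $(\alpha_i)$ may be spread across infinitely many $\mathcal{H}$-classes, so one must argue that only finitely many---in fact, essentially one---of the sandwich projections of $t$ can be non-trivial modulo $h(\mathscr{I}_\lambda^{n-1})$. Equivalently, the assumption $t\notin h(\mathscr{I}_\lambda^{n-1})$ must be shown to force the net to concentrate cofinally inside a single rank-$n$ $\mathcal{H}$-class; here the discreteness of $\mathscr{I}_\lambda^n$ combines with the continuity of inversion in $T$ in the most delicate way, and constitutes the real content of the theorem beyond the base case.
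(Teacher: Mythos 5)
Your setup is sound: the induction on $n$, the base case $\mathscr{I}_\lambda^1\cong B_\lambda$ via the cited theorem of Gutik and Pavlyk, the reduction to showing that $h(\mathscr{I}_\lambda^n)$ is closed in $T$, and the observation that $h(\mathscr{I}_\lambda^{n-1})$ is already closed by the inductive hypothesis all match the paper. But the argument stops exactly where the theorem begins. Your ``patching step'' --- showing that a point $t\in\operatorname{cl}_T h(\mathscr{I}_\lambda^n)\setminus h(\mathscr{I}_\lambda^{n-1})$ forces an approximating net to concentrate cofinally in a single (finite) rank-$n$ $\mathcal{H}$-class --- is not proved, and you say yourself that it ``constitutes the real content of the theorem beyond the base case.'' The sandwich identities $h(e_A)\,t\,h(e_B)=\lim_i h(e_A\alpha_i e_B)$ do show that each sandwich lies in the finite set $h(e_A\mathscr{I}_\lambda^n e_B)$, but to recover $t$ itself you would need $t=h(e_A)\,t\,h(e_B)$ for some fixed pair $(A,B)$, which is precisely the concentration claim; nothing in the proposal rules out a net whose domains and ranges wander over infinitely many $n$-element subsets of $\lambda$. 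As written, this is a plan with the decisive step left open.

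For comparison, the paper closes this gap by contradiction rather than by patching. Assuming there is a point $x\in\operatorname{cl}_S h(\mathscr{I}_\lambda^k)\setminus h(\mathscr{I}_\lambda^k)$, it picks a neighbourhood $W(x)$ disjoint from the closed set $h(\mathscr{I}_\lambda^{k-1})$, first rules out $x$ being an idempotent, notes that $x^{-1}$ also lies outside $h(\mathscr{I}_\lambda^k)$ (continuity of inversion), and chooses neighbourhoods with $V(x)\cdot V(x^{-1})\cdot V(x)\subseteq W(x)$, each disjoint from $h(\mathscr{I}_\lambda^{k-1})$. The preimages $V$ and $V^{\ast}$ of the traces $V(x)\cap h(\mathscr{I}_\lambda^k)$ and $V(x^{-1})\cap h(\mathscr{I}_\lambda^k)$ are then infinite sets consisting entirely of elements of rank exactly $k$, and the combinatorial fact that such a triple product $V\cdot V^{\ast}\cdot V$ must contain an element of rank $<k$ (an infinite set of rank-$k$ partial bijections cannot have all ranges equal and all domains equal simultaneously) produces a point of $h(\mathscr{I}_\lambda^{k-1})$ inside $W(x)$ --- the desired contradiction. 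This triple-product argument is the quantitative substitute for your concentration claim; to complete your direct version you would have to prove essentially the same combinatorial statement, so the gap is not a formality but the heart of the inductive step.
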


\begin{proof}
In the case $\lambda<\omega$ the assertion of the theorem is
obvious. Suppose now that $\lambda\geqslant\omega$. We shall prove
the assertion of the theorem by induction.

Theorem~14 from~\cite{GutikPavlyk2005} implies that the semigroup
$\mathscr{I}_\lambda^1$ is algebraically $h$-closed in the class
of all topological inverse semigroups. We suppose that the
assertion of the theorem holds for $n=1,2,\ldots,k-1$ and we shall
prove that it is true for $n=k$.

Suppose to the contrary, that there exist a topological inverse
semigroup $S$ and continuous homomorphisms $h$ from the semigroup
$\mathscr{I}_\lambda^k$ with the discrete topology into $S$ such
that  $(\mathscr{I}_\lambda^k)h$ is a non-closed subsemigroup of
$S$. Since a homomorphic image of an inverse semigroup is an
inverse semigroup, Proposition~II.2 of~\cite{EberhartSelden1969}
implies that $\operatorname{cl}_S((\mathscr{I}_\lambda^k)h)$ is a
topological inverse semigroup. Therefore, without loss of
generality we can assume that $(\mathscr{I}_\lambda^k)h$ is a
dense inverse subsemigroup of $S$.

Let $x\in S\setminus(\mathscr{I}_\lambda^k)h$ and $W(x)$ be an
open neighbourhood of the point $x$. Since the semigroup
$\mathscr{I}_\lambda^{k-1}$ is algebraically $h$-closed in the
class of topological inverse semigroups, without loss of
generality we can assume that $W(x)\cap
(\mathscr{I}_\lambda^{k-1})h=\varnothing$.

Suppose that $x$ is an idempotent of $S$. Then there exists an
open neighbourhood $V(x)\subseteq W(x)$ such that $V(x)\cdot
V(x)\subseteq W(x)$. Then since the neighbourhood $V(x)$ contains
infinitely points from $(\mathscr{I}_\lambda^k)h\setminus
(\mathscr{I}_\lambda^{k-1})h$ we have that $\big(V(x)\cdot
V(x)\big)\cap(\mathscr{I}_\lambda^{k-1})h\neq\varnothing$. A
contradiction to the assumption $W(x)\cap
(\mathscr{I}_\lambda^{k-1})h=\varnothing$. There fore we have
$x\cdot x\neq x$.

Since $\mathscr{I}_\lambda^{k-1}$ is an inverse subsemigroup of
$\mathscr{I}_\lambda^{k}$ Proposition~II.2
\cite{EberhartSelden1969} implies that $x^{-1}\in
S\setminus(\mathscr{I}_\lambda^k)h$. Since $S$ is a topological
inverse semigroup and the semigroup $\mathscr{I}_\lambda^{k-1}$ is
algebraically $h$-closed in the class of topological inverse
semigroups, there exist open neighbourhoods $V(x)$ and $V(x^{-1})$
of the points $x$ and $x^{-1}$, respectively, such that
\begin{equation*}
V(x)\cdot V(x^{-1})\cdot V(x)\subseteq W(x), \quad V(x)\cap
(\mathscr{I}_\lambda^{k-1})h=\varnothing, \quad V(x^{-1})\cap
(\mathscr{I}_\lambda^{k-1})h=\varnothing,
\end{equation*}
\begin{equation*}
    \mbox{and}\qquad V(x)\subseteq W(x).
\end{equation*}
We observe that the set $V(x)\cap (\mathscr{I}_\lambda^{k})h$ is
infinite, otherwise we have that $x\not\in
\operatorname{cl}_S((\mathscr{I}_\lambda^k)h)$. Since $S$ is a
topological inverse semigroup, the set $V(x^{-1})\cap
(\mathscr{I}_\lambda^{k})h$ is infinite too. Let $V=(V(x)\cap
(\mathscr{I}_\lambda^{k})h)h^{-1}$ and $V^{\ast}=(V(x^{-1})\cap
(\mathscr{I}_\lambda^{k})h)h^{-1}$. Then the sets $V$ and
$V^{\ast}$ are infinite, and we have $V\cap
\mathscr{I}_\lambda^{k-1}=\varnothing$ and $V^{\ast}\cap
\mathscr{I}_\lambda^{k-1}=\varnothing$. Therefore $V\cdot
V^{\ast}\cdot V\cap\mathscr{I}_\lambda^{k-1}\neq\varnothing$ and
hence $((V)h\cdot (V^{\ast})h\cdot (V)h)\cap
(\mathscr{I}_\lambda^{k-1})h\neq\varnothing$. But
\begin{equation*}
((V)h\cdot (V^{\ast})h\cdot (V)h)\subseteq V(x)\cdot
V(x^{-1})\cdot V(x)\subseteq W(x),
\end{equation*}
a contradiction to the assumption $W(x)\cap
(\mathscr{I}_\lambda^{k-1})h=\varnothing$. The obtained
contradiction implies the assertion of the theorem.
\end{proof}

Theorem~\ref{theorem1} implies

\begin{corollary}\label{corollary2}
Let $n$ be any positive integer and let $\tau$ be any inverse
semigroup topology on $\mathscr{I}_\lambda^{n}$. Then
$(\mathscr{I}_\lambda^{n},\tau)$ is an absolutely $H$-closed
topological inverse semigroup in the class of topological inverse
semigroups.
\end{corollary}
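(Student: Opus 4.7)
The plan is to deduce the corollary as a formal consequence of Theorem~\ref{theorem1} by exploiting the fact that the discrete topology $\mathfrak{d}$ on $\mathscr{I}_\lambda^{n}$ is the finest possible topology. Unwinding Definition~\ref{def2}, I have to verify that for every topological inverse semigroup $T$ and every continuous homomorphism $h\colon (\mathscr{I}_\lambda^{n},\tau)\to T$, the image $(\mathscr{I}_\lambda^{n})h$ is $H$-closed in the class of topological inverse semigroups.

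The key observation is that the identity map $\mathrm{id}\colon (\mathscr{I}_\lambda^{n},\mathfrak{d})\to (\mathscr{I}_\lambda^{n},\tau)$ is a continuous semigroup homomorphism, simply because the domain carries the discrete topology. Composing, I obtain a continuous homomorphism $h\circ\mathrm{id}\colon (\mathscr{I}_\lambda^{n},\mathfrak{d})\to T$ whose image coincides with $(\mathscr{I}_\lambda^{n})h$. Now Theorem~\ref{theorem1} together with Definition~\ref{def3} tells me that $(\mathscr{I}_\lambda^{n},\mathfrak{d})$ is absolutely $H$-closed in the class of topological inverse semigroups, so by Definition~\ref{def2} the image $(\mathscr{I}_\lambda^{n})h$ is $H$-closed in that class. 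This finishes the proof, and also shows that $(\mathscr{I}_\lambda^{n},\tau)$ itself is $H$-closed by taking $h$ to be the identity into any topological inverse semigroup extension.

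I do not expect a substantial obstacle here: the corollary is a bookkeeping consequence of Theorem~\ref{theorem1}, and the only ``content'' is the elementary observation that continuity of a homomorphism out of $(\mathscr{I}_\lambda^{n},\tau)$ automatically entails continuity out of $(\mathscr{I}_\lambda^{n},\mathfrak{d})$. The only hypothesis to verify en route is that $(\mathscr{I}_\lambda^{n},\tau)$ is a legitimate member of the class of topological inverse semigroups, which is exactly the content of the assumption that $\tau$ is an inverse semigroup topology.
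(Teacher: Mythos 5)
Your proof is correct and matches the paper's intent: the paper derives the corollary solely by the remark ``Theorem~\ref{theorem1} implies,'' and your argument (continuity of the identity from the discrete topology to $\tau$, followed by composition and an appeal to Definitions~\ref{def2} and~\ref{def3}) is exactly the standard bookkeeping that this remark leaves implicit.
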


The following theorem generalizes Theorem~10
from~\cite{GutikPavlyk2005}.

\begin{theorem}\label{theorem3}
A topological semigroup $S$ with countably compact square $S\times
S$ does not contain an infinite countable semigroup of matrix
units.
\end{theorem}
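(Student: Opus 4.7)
My plan is to argue by contradiction: assume the countable semigroup of matrix units $B_\omega$ embeds into $S$ as a subsemigroup. Fix two sequences $a_n = (1,n)$ and $b_n = (n,1)$ in $B_\omega$ and record the three identities
\begin{equation*}
a_n\cdot b_n = (1,1), \qquad b_n\cdot a_n = (n,n), \qquad a_n\cdot (m,m) = 0 \quad \text{whenever}\quad n\neq m,
\end{equation*}
which hold in $B_\omega$, hence in $S$.

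By countable compactness of $S\times S$, the sequence $\{(a_n,b_n)\}_{n\in\omega}$ admits a cluster point $(p,q)\in S\times S$, so one may extract a subnet $\{(a_{n_\sigma},b_{n_\sigma})\}_{\sigma\in\Sigma}$ converging to $(p,q)$ along which $n_\sigma\to\infty$. Continuity of the multiplication in $S$, applied along this very subnet, yields simultaneously the two limits
\begin{equation*}
p q = \lim_\sigma a_{n_\sigma}b_{n_\sigma} = (1,1), \qquad f := q p = \lim_\sigma b_{n_\sigma}a_{n_\sigma} = \lim_\sigma (n_\sigma,n_\sigma).
\end{equation*}

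The crux is to show $pf=0$ by a double application of continuity. First, continuity of left multiplication by $p$ gives $pf = \lim_\sigma p\cdot (n_\sigma,n_\sigma)$. For any fixed $\sigma$, continuity of right multiplication by $(n_\sigma,n_\sigma)$, combined with $p=\lim_\tau a_{n_\tau}$, yields $p\cdot (n_\sigma,n_\sigma) = \lim_\tau a_{n_\tau}\cdot (n_\sigma,n_\sigma)$; but eventually $n_\tau\neq n_\sigma$ by cofinality, so $a_{n_\tau}\cdot (n_\sigma,n_\sigma) = 0$ eventually and therefore $p\cdot (n_\sigma,n_\sigma)=0$ for every $\sigma$. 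Hence $pf=0$.

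The contradiction then follows from two evaluations of $pfq$: associativity together with $pq=(1,1)$ gives
\begin{equation*}
pfq = p(qp)q = (pq)(pq) = (1,1)\cdot (1,1) = (1,1),
\end{equation*}
whereas directly $pfq = (pf)q = 0\cdot q = 0$, contradicting $(1,1)\neq 0$ in $B_\omega\subseteq S$. The main conceptual obstacle is the need to extract the two sides of the reversed product $qp$ along a common subnet: countable compactness of $S$ alone would yield separate cluster points for $(a_n)$ and $(b_n)$, with no guarantee of simultaneous realization, whereas countable compactness of the \emph{square} $S\times S$ supplies precisely the single subnet on which both sequences converge at once. This is exactly what the hypothesis is tailored for, and no countable compactness of higher powers of $S$ enters the argument.
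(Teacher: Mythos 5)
Your argument is correct in substance and opens exactly as the paper's proof does: both feed the sequence of pairs $\big((1,n),(n,1)\big)$ into the countably compact square $S\times S$, extract a cluster point $(p,q)$, and use joint continuity together with $(1,n)\cdot(n,1)=(1,1)$ to conclude $pq=(1,1)$. From there the two proofs part ways. The paper invokes an external fact (Lemma~4 of Gutik--Pavlyk on topological semigroups of matrix units) that every non-zero element of $B_\omega$ is isolated in the induced topology, takes a neighbourhood $O((1,1))$ meeting $B_\omega$ only in $(1,1)$, chooses $O(p)\cdot O(q)\subseteq O((1,1))$, and then picks \emph{mismatched} indices $n\neq m$ with $(1,n)\in O(p)$ and $(m,1)\in O(q)$, so that $0=(1,n)\cdot(m,1)$ lands in a neighbourhood from which $0$ has been excluded. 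You exploit the same mismatch phenomenon ($a_{n_\tau}\cdot(n_\sigma,n_\sigma)=0$ for $n_\tau\neq n_\sigma$) but package it algebraically: you also form $f=qp=\lim_\sigma(n_\sigma,n_\sigma)$, prove $pf=0$ by an iterated limit, and obtain the contradiction from the two evaluations of $pfq$. Your route is self-contained (no isolated-point lemma, no neighbourhood bookkeeping) at the price of a more delicate double-limit argument; the paper's route is shorter but leans on the cited lemma.

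One step needs a word of justification: in the last display you write $0\cdot q=0$, but $0$ is a priori only a zero \emph{of $B_\omega$}, not of $S$, and $q$ need not lie in $B_\omega$. The fix is immediate and uses exactly the device you employ throughout: since $q=\lim_\tau b_{n_\tau}$ with $b_{n_\tau}\in B_\omega$, continuity of left translation by $0$ gives $0\cdot q=\lim_\tau 0\cdot b_{n_\tau}=\lim_\tau 0=0$. With that line inserted the proof is complete.
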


\begin{proof}
Suppose to the contrary: there exists a topological semigroup $S$
with countably compact square $S\times S$ such that $S$ contains
an infinite countable semigroup of $\omega\times\omega$-matrix
units $B_\omega$. We numerate elements of a set $X$ of cardinality
$\omega$ by non-negative integers, i.~e., $X=\{ \alpha_0,
\alpha_1, \alpha_2, \ldots\}$. Then we consider the sequence
$\{((\alpha_0,\alpha_n),(\alpha_n,\alpha_0))_{n=1}^\infty\}$ in
$B_\omega\times B_\omega\subset S\times S$. The countable
compactness of $S\times S$ guarantees that this sequence has an
accumulation point $(a,b)\in S\times S$. Since
$(\alpha_0,\alpha_n)\cdot(\alpha_n,\alpha_0)=(\alpha_0,\alpha_0)$,
the continuity of the semigroup operation on $S$ guarantees that
$ab=(\alpha_0,\alpha_0)$. By Lemma~4~\cite{GutikPavlyk2005}, every
non-zero element of the semigroup of $\omega\times\omega$-matrix
units $B_\omega$ endowed with the topology induced from $S$ is an
isolated point in $B_\omega$. So, there exists a neighbourhood
$O((\alpha_0,\alpha_0))\subseteq S$ of the point
$(\alpha_0,\alpha_0)\in B_\omega$ containing no other points of
the semigroup $B_\omega$. Since $ab=(\alpha_0,\alpha_0)$, the
points $a,b$ have neighborhoods $O(a), O(b)\subset S$ such that
$O(a)\cdot O(b)\subset O((\alpha_0,\alpha_0))$. Since $a$ is an
accumulation point of the sequence $(\alpha_0,\alpha_n)$, there
exists a positive integer $n$ such that $(\alpha_0,\alpha_n)\in
O(a)$. Similarly there exists a positive integer $m>n$ such that
$(\alpha_m,\alpha_0)\in O(b)$. Then
$(\alpha_0,\alpha_n)\cdot(\alpha_m,\alpha_0)=0\in O(a)\cdot
O(b)\cap B_\omega=(\alpha_0,\alpha_0)$, which is a contradiction.
\end{proof}

Since every infinite semigroup of $\lambda\times\lambda$-matrix
units $B_\lambda$ contains the semigroup $B_\omega$,
Theorem~\ref{theorem3} implies

\begin{theorem}\label{theorem3a}
A topological semigroup $S$ with countably compact square $S\times
S$ does not contain an infinite semigroup of matrix units.
\end{theorem}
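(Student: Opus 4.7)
The plan is to derive Theorem~\ref{theorem3a} as an almost immediate corollary of Theorem~\ref{theorem3}, exploiting the obvious fact that any infinite semigroup of matrix units $B_\lambda$ contains a copy of the countable one $B_\omega$ as a subsemigroup.

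First I would argue by contradiction: suppose $S$ is a topological semigroup with countably compact square $S\times S$ that contains an infinite semigroup of $\lambda\times\lambda$-matrix units $B_\lambda$ for some infinite cardinal $\lambda$. The key algebraic observation is that, because $\lambda\geqslant\omega$, we can choose a countably infinite subset $\{\alpha_0,\alpha_1,\alpha_2,\ldots\}\subseteq\lambda$, and then the set $\{(\alpha_i,\alpha_j):i,j<\omega\}\cup\{0\}$ is a subsemigroup of $B_\lambda$ which, by inspection of the multiplication rule $(a,b)\cdot(c,d)=(a,d)$ if $b=c$ and $0$ otherwise, is isomorphic to $B_\omega$. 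Hence $S$ contains an infinite countable semigroup of matrix units.

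This directly contradicts Theorem~\ref{theorem3}, which asserts that a topological semigroup whose square is countably compact cannot contain $B_\omega$. Therefore no such $S$ exists, and the theorem follows.

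There is essentially no obstacle here: the entire content of the reduction is the remark inserted by the authors before the statement, and the only thing to verify is the purely algebraic embedding $B_\omega\hookrightarrow B_\lambda$, which is immediate from the definition of the matrix-unit multiplication. The nontrivial work has already been carried out in the proof of Theorem~\ref{theorem3}, where one combines countable compactness, the continuity of the semigroup operation, and the fact (Lemma~4 of~\cite{GutikPavlyk2005}) that non-zero matrix units are isolated points.
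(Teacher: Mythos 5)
Your proposal is correct and coincides with the paper's own derivation: the authors obtain Theorem~\ref{theorem3a} from Theorem~\ref{theorem3} precisely by remarking that every infinite semigroup of $\lambda\times\lambda$-matrix units $B_\lambda$ contains a copy of $B_\omega$. Your explicit verification of the embedding $B_\omega\hookrightarrow B_\lambda$ is a harmless elaboration of the same one-line reduction.
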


Theorem~\ref{theorem3} implies

\begin{corollary}[{\cite[Theorem~10]{GutikPavlyk2005}}]\label{corollary4}
An infinite semigroup of matrix units does not embed into a
compact topological semigroup.
\end{corollary}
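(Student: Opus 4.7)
The statement to prove is Corollary~\ref{corollary4}, asserting that an infinite semigroup of matrix units cannot embed into a compact topological semigroup. The plan is to deduce this immediately from Theorem~\ref{theorem3a} by a standard compactness argument; essentially no new idea is needed, so this will be a short deduction rather than a genuine proof from scratch.

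First I would suppose, for contradiction, that an infinite semigroup of matrix units $B_\lambda$ embeds into some compact topological semigroup $S$. The only observation required is that $S\times S$ is then countably compact: indeed, $S\times S$ is a product of two compact Hausdorff spaces, hence compact by Tychonoff (or by the elementary fact that a finite product of compact spaces is compact), and every compact space is countably compact. Thus the hypotheses of Theorem~\ref{theorem3a} are satisfied by $S$.

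Applying Theorem~\ref{theorem3a} to $S$, we conclude that $S$ cannot contain an infinite semigroup of matrix units, contradicting the assumed embedding of $B_\lambda$ into $S$. This completes the deduction.

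There is no real obstacle here; the entire content of the corollary has already been absorbed into Theorem~\ref{theorem3a}, and the only point one must be careful about is the passage from compactness of $S$ to countable compactness of $S\times S$, which is immediate under the Hausdorff assumption in force throughout the paper.
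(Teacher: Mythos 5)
Your deduction is correct and matches the paper's intent exactly: the paper states this corollary as an immediate consequence of Theorem~\ref{theorem3} (equivalently Theorem~\ref{theorem3a}), and the only content is the observation you supply, namely that the square of a compact Hausdorff semigroup is compact and hence countably compact. Nothing further is needed.
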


A semigroup homomorphism $h\colon S\rightarrow T$ is called
\emph{annihilating} if $(s)h=(t)h$ for all $s,t\in S$.

A~semigroup $S$ is called \emph{congruence-free} if it has only
two congruences: identical and universal~\cite{CP}. Obviously, a
semigroup $S$ is congruence-free if and only if every homomorphism
$h$ of $S$ into an arbitrary semigroup $T$ is an isomorphism
``into'' or is an annihilating homomorphism.

Theorem~1 from~\cite{Gluskin1955} implies that the semigroup
$B_\lambda$ is congruence-free for every cardinal
$\lambda\geqslant 2$ and hence Theorem~\ref{theorem3} implies

\begin{theorem}\label{theorem5}
Every continuous homomorphism from an infinite topological
semigroup of matrix units into a topological semigroup $S$ with
countably compact square $S\times S$ is annihilating.
\end{theorem}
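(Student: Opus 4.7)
The plan is to reduce Theorem~\ref{theorem5} to the combination of two facts already available: the congruence-freeness of $B_\lambda$ (by Gluskin's Theorem~1 in \cite{Gluskin1955}) and the non-embedding result Theorem~\ref{theorem3a}. The argument will be short because all the hard work is done by these previous results.

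Let $h\colon B_\lambda\to S$ be a continuous homomorphism, where $B_\lambda$ is an infinite topological semigroup of matrix units and $S$ has countably compact square. First I would invoke the fact, recalled in the paragraph preceding the theorem, that a semigroup is congruence-free if and only if every homomorphism of it into an arbitrary semigroup is either an injective homomorphism or annihilating. Applied to $h$, this yields a dichotomy: either $h$ is annihilating (which is what we want) or $h$ is injective.

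Next I would rule out the injective case by a topological argument. If $h$ were injective, then $h(B_\lambda)$ would be an algebraic copy of $B_\lambda$ sitting inside $S$, i.e.\ $S$ would contain an infinite semigroup of matrix units as an abstract subsemigroup. Equipping $h(B_\lambda)$ with the topology induced from $S$ exhibits $S$ as a topological semigroup containing an infinite topological semigroup of matrix units, which directly contradicts Theorem~\ref{theorem3a} (whose hypothesis, countable compactness of $S\times S$, is exactly what we have assumed). Hence $h$ cannot be injective, so $h$ must be annihilating.

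The only subtlety to be careful about is that Theorem~\ref{theorem3a} forbids even an \emph{algebraic} containment of an infinite matrix-unit semigroup inside $S$, since the statement does not require the embedding to be topological; one merely needs to note that the induced topology on the image turns it into a topological subsemigroup to which Theorem~\ref{theorem3a} applies. Apart from that there are no real obstacles, and continuity of $h$ is used only indirectly, through the application of Theorem~\ref{theorem3a} to the image.
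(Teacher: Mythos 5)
Your proposal is correct and follows exactly the paper's route: the paper derives Theorem~\ref{theorem5} from Gluskin's congruence-freeness of $B_\lambda$ together with Theorem~\ref{theorem3} (equivalently Theorem~\ref{theorem3a}), ruling out the injective alternative because $S$ would then contain an infinite topological semigroup of matrix units. Your write-up merely makes explicit the one-line deduction the paper leaves to the reader.
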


Theorem~\ref{theorem5} implies

\begin{corollary}[{\cite[Theorem~12]{GutikPavlyk2005}}]\label{corollary6}
Every continuous homomorphism from an infinite topological
semigroup of matrix units into a compact topological semigroup is
annihilating.
\end{corollary}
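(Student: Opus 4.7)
The plan is to deduce Corollary~\ref{corollary6} as an immediate consequence of Theorem~\ref{theorem5}. The bridge between the two statements is the standard fact that every compact Hausdorff space is countably compact, and that the product of compact spaces is compact (by Tychonoff, or directly for two factors).

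Concretely, let $T$ be a compact topological semigroup and let $h\colon B_\lambda\to T$ be a continuous homomorphism from an infinite topological semigroup of matrix units. Then $T\times T$ is compact (as a product of two compact spaces) and hence countably compact. Thus $T$ satisfies the hypothesis of Theorem~\ref{theorem5}, so $h$ is annihilating.

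The main obstacle is essentially nonexistent: the entire content is the observation that \textquotedblleft compact topological semigroup\textquotedblright{} is a special case of \textquotedblleft topological semigroup with countably compact square\textquotedblright{}. If one wanted to be slightly more careful, one could note that the Hausdorff assumption in force throughout the paper guarantees that compactness implies countable compactness without needing to invoke separation axioms beyond what is already assumed. No further work is needed beyond this reduction.
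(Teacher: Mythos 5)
Your proposal is correct and matches the paper's approach exactly: the paper derives Corollary~\ref{corollary6} directly from Theorem~\ref{theorem5} via the observation that a compact semigroup has a (countably) compact square. Nothing further is needed.
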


\begin{theorem}\label{theorem7}
Let $\lambda\ge\omega$ and $n$ be a positive integer. Then every
continuous homomorphism of the topological semigroup
$\mathscr{I}_\lambda^{n}$ into a topological semigroup $S$ with
countably compact square $S\times S$ is annihilating.
\end{theorem}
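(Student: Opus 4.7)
The plan is to proceed by induction on $n$. The base case $n = 1$ follows directly from Theorem~\ref{theorem5}, since $\mathscr{I}_\lambda^1$ is algebraically isomorphic to $B_\lambda$, so for $\lambda \geq \omega$ any topology on $\mathscr{I}_\lambda^1$ making it a topological semigroup already presents it as an infinite topological semigroup of matrix units, to which Theorem~\ref{theorem5} applies.

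For the inductive step, I would assume the statement for $n = 1, \ldots, k-1$ and consider a continuous homomorphism $h\colon \mathscr{I}_\lambda^k \to S$. Since $\mathscr{I}_\lambda^{k-1}$ is a subsemigroup, the restriction $h|_{\mathscr{I}_\lambda^{k-1}}$ is a continuous homomorphism from a topological semigroup (in the subspace topology), so by the inductive hypothesis it is annihilating; let $z \in S$ denote its constant value. Because $\mathscr{I}_\lambda^{k-1}$ is an ideal of $\mathscr{I}_\lambda^k$, for every $\alpha \in \mathscr{I}_\lambda^k$ and every $\beta \in \mathscr{I}_\lambda^{k-1}$ both products $\alpha\beta$ and $\beta\alpha$ again lie in $\mathscr{I}_\lambda^{k-1}$, which forces $h(\alpha)\cdot z = z \cdot h(\alpha) = z$; in particular $z$ acts as a two-sided absorbing element on $h(\mathscr{I}_\lambda^k)$.

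The main step, and the principal technical hurdle, is to show that $h(\alpha) = z$ also for every rank-$k$ element $\alpha$. Fixing any $(k-1)$-subset $\{x_1, \ldots, x_{k-1}\} \subset X$ and letting $Y = X \setminus \{x_1, \ldots, x_{k-1}\}$, which still has cardinality $\lambda$, I would introduce
\[
e_{y,w} = \begin{pmatrix} x_1 & \cdots & x_{k-1} & y \\ x_1 & \cdots & x_{k-1} & w \end{pmatrix} \in \mathscr{I}_\lambda^k, \qquad y,w \in Y.
\]
A direct computation gives $e_{y,w} \cdot e_{u,v} = e_{y,v}$ whenever $w = u$, and $e_{y,w} \cdot e_{u,v} \in \mathscr{I}_\lambda^{k-1}$ otherwise. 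Consequently, the map $\phi$ from the semigroup $B_Y$ of $Y \times Y$-matrix units into $S$, defined by $\phi(y,w) = h(e_{y,w})$ and $\phi(0) = z$, is a well-defined semigroup homomorphism. Equipping $B_Y$ with the discrete topology makes it an infinite topological semigroup of matrix units and renders $\phi$ automatically continuous, so Theorem~\ref{theorem5} applies and forces $\phi$ to be annihilating; thus $h(e_{y,w}) = z$ for all $y, w \in Y$.

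To conclude, observe that for an arbitrary $k$-subset $A = \{a_1, \ldots, a_k\}$ of $X$, choosing $\{x_1, \ldots, x_{k-1}\} = \{a_1, \ldots, a_{k-1}\}$ and $y = w = a_k$ produces precisely the identity idempotent $e_A$ of rank $k$ on $A$, so $h(e_A) = z$. Any rank-$k$ element $\alpha$ with $\operatorname{dom}\alpha = A$ satisfies $e_A \cdot \alpha = \alpha$, and hence $h(\alpha) = h(e_A) \cdot h(\alpha) = z \cdot h(\alpha) = z$. Combined with the value $z$ on $\mathscr{I}_\lambda^{k-1}$ from the inductive hypothesis, this shows that $h$ is annihilating on all of $\mathscr{I}_\lambda^k$, completing the induction.
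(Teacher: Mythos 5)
Your proof is correct, but it takes a genuinely different route from the paper's. The paper, after reducing to the discrete case, fixes a rank-$i$ element $\alpha$ and a point $y_1\in\operatorname{ran}\alpha$, and considers the subsemigroup $T_{y_1}$ of all transformations in $\mathscr{I}_\lambda^{k}$ that fix $y_1$; this $T_{y_1}$ is isomorphic to $\mathscr{I}_\lambda^{k-1}$ (with the partial identity on $\{y_1\}$ as its zero) yet already contains elements of rank $k$, so the induction hypothesis applied to $T_{y_1}$ forces $h$ to be constant on it, and the identity $\alpha=\alpha\gamma$, where $\gamma\in T_{y_1}$ is the partial identity on $\operatorname{ran}\alpha$, immediately collapses $\alpha$ to $(0)h$. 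You instead apply the induction hypothesis only to the ideal $\mathscr{I}_\lambda^{k-1}$ of elements of rank $\leqslant k-1$, which says nothing about rank-$k$ elements, and then close the gap by exhibiting, above a fixed rank-$(k-1)$ partial identity, a family $e_{y,w}$ whose image under $h$ (together with the absorbing value $z$) realizes a homomorphic copy of the infinite semigroup of matrix units $B_Y$, to which Theorem~\ref{theorem5} applies a second time. Both arguments are sound; the paper's point-stabilizer trick is shorter because $T_{y_1}$ reaches up to the top rank and lets the induction hypothesis do all the work, while your version is more structural --- it makes explicit that the top layer of $\mathscr{I}_\lambda^{k}$ modulo its ideal is governed by matrix-unit configurations --- at the cost of re-invoking the matrix-units theorem at every inductive step. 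One small point worth making explicit in your write-up is the verification that $z$ is idempotent and two-sidedly absorbing for $h(\mathscr{I}_\lambda^{k})$ (which you do note via the ideal property), since this is what makes $\phi$ a well-defined homomorphism on the zero of $B_Y$.
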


\begin{proof}
We shall prove the assertion of the theorem by induction. By
Theorem~\ref{theorem5} every continuous homomorphism of the
topological semigroup $\mathscr{I}_\lambda^{1}$ into a topological
semigroup $S$ with countably compact square $S\times S$ is
annihilating.. We suppose that the assertion of the theorem holds
for $n=1,2,\ldots,k-1$ and we shall prove that it is true for
$n=k$.

Obviously it is sufficiently to show that the statement of the
theorem holds for the discrete semigroup
$\mathscr{I}_\lambda^{k}$. Let
$h\colon\mathscr{I}_\lambda^{k}\rightarrow S$ be arbitrary
homomorphism from $\mathscr{I}_\lambda^{k}$ with the discrete
topology into a topological semigroup $S$ with countably compact
square $S\times S$. Then by Theorem~\ref{theorem5} the restriction
$h_{\mathscr{I}_\lambda^{1}}\colon\mathscr{I}_\lambda^{1}
\rightarrow S$ of homomorphism $h$ onto the subsemigroup
$\mathscr{I}_\lambda^{1}$ of $\mathscr{I}_\lambda^{k}$ is an
annihilating homomorphisms. Let
$(\mathscr{I}_\lambda^{1})h_{\mathscr{I}_\lambda^{1}}=
(\mathscr{I}_\lambda^{1})h=e$, where $e\in E(S)$. We fix any
$\alpha\in\mathscr{I}_\lambda^{k}$ with
$\operatorname{ran}(\alpha)=i\geqslant 2$. Let
 $\alpha=
 \left(%
 \begin{array}{cccc}
  x_1 & x_2 & \cdots & x_i \\
  y_1 & y_2 & \cdots & y_i \\
 \end{array}%
 \right)
 $
(where $x_1, x_2,\ldots, x_i, y_1, y_2,\ldots, y_i\in X$ for some
set $X$ of cardinality $\lambda$). We fix $y_1\in X$ and define
subsemigroup $T_{y_1}$ of $\mathscr{I}_\lambda^{k}$ as follows:
\begin{equation*}
    T_{y_1}=\left\{\beta\in\mathscr{I}_\lambda^{k}\mid
      \Big(%
     \begin{array}{c}
      y_1\\
      y_1\\
     \end{array}%
      \Big)\cdot\beta=\beta\cdot
      \Big(%
     \begin{array}{c}
      y_1\\
      y_1\\
     \end{array}%
      \Big)=
      \Big(%
     \begin{array}{c}
      y_1\\
      y_1\\
     \end{array}%
      \Big)
    \right\}.
\end{equation*}
Then the semigroup $T_{y_1}$ is isomorphic to the semigroup
$\mathscr{I}_\lambda^{k-1}$, the element
 $\Big(%
     \begin{array}{c}
      y_1\\
      y_1\\
     \end{array}%
      \Big)
 $
is zero of $T_{y_1}$ and hence by induction assumption we have
$\Big(\Big(%
     \begin{array}{c}
      y_1\\
      y_1\\
     \end{array}%
      \Big)\Big)h=(\beta)h
 $
for all $\beta\in T_{y_1}$.

Since
 $\Big(%
     \begin{array}{c}
      y_1\\
      y_1\\
     \end{array}%
      \Big)\in\mathscr{I}_\lambda^{1}
 $,
we have that $(\beta)h=(0)h$ for all $\beta\in T_{y_1}$. But
$\alpha=\alpha\gamma$, where
 $\gamma=
 \left(%
 \begin{array}{cccc}
  y_1 & y_2 & \cdots & y_i \\
  y_1 & y_2 & \cdots & y_i \\
 \end{array}%
 \right)\in T_{y_1}
 $,
and hence we have
\begin{equation*}
    (\alpha)h=(\alpha\gamma)h=(\alpha)h\cdot(\gamma)h=
    (\alpha)h\cdot(0)h=(\alpha\cdot 0)h=(0)h=e.
\end{equation*}
This completes the proof of the theorem.
\end{proof}

Theorem~\ref{theorem7} implies

\begin{theorem}\label{theorem8}
Let $\lambda\ge\omega$ and $n$ be a positive integer. Then every
continuous homomorphism of the topological semigroup
$\mathscr{I}_\lambda^{n}$ into a compact topological semigroup is
annihilating.
\end{theorem}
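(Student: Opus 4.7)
The plan is to deduce Theorem~\ref{theorem8} as an immediate corollary of Theorem~\ref{theorem7}, with no induction or additional semigroup-theoretic input required. The only ingredient is the standard topological fact that compactness is preserved under finite products (Tychonoff's theorem), together with the observation that every compact Hausdorff space is countably compact. Hence, whenever $S$ is a compact topological semigroup, the product $S\times S$ is compact and therefore countably compact.

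First I would state: let $S$ be a compact topological semigroup and let $h\colon\mathscr{I}_\lambda^n\to S$ be a continuous homomorphism. By the two remarks above, $S\times S$ is countably compact, so the hypotheses of Theorem~\ref{theorem7} are satisfied. Applying Theorem~\ref{theorem7} then gives that $h$ is annihilating, which is exactly the desired conclusion.

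The main (and only) point to verify is the passage from ``compact'' to ``countably compact square,'' which is immediate from Tychonoff's theorem; there is essentially no obstacle here, since all of the substantive work, including the induction on $n$ and the reduction to the case of $\mathscr{I}_\lambda^1$ via the subsemigroups $T_{y_1}$, has already been carried out in Theorem~\ref{theorem7}. Thus the proof reduces to a single short paragraph citing Theorem~\ref{theorem7}.
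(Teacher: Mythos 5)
Your proposal is correct and follows exactly the route the paper takes: the paper states Theorem~\ref{theorem8} as an immediate consequence of Theorem~\ref{theorem7}, and your observation that compactness of $S$ gives countable compactness of $S\times S$ is precisely the (implicit) bridging step. Nothing further is needed.
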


Recall \cite{DeLeeuwGlicksberg1961} that a {\it Bohr
compactification of a topological semigroup $S$} is a~pair
$(\beta, B(S))$ such that $B(S)$ is a compact semigroup,
$\beta\colon S\to B(S)$ is a continuous homomorphism, and if
$g\colon S\to T$ is a continuous homomorphism of $S$ into a
compact semigroup $T$, then there exists a unique continuous
homomorphism $f\colon B(S)\to T$ such that the diagram
\begin{equation*}
\xymatrix{ S\ar[r]^\beta\ar[d]_g & B(S)\ar[ld]^f\\ T }
\end{equation*}
commutes.

Theorem~\ref{theorem8} and Theorem~2.44~\cite[Vol.~1]{CHK} imply

\begin{theorem}\label{theorem9}
If $\lambda\ge\omega$ and $n$ is a positive integer, then the Bohr
compactification of the topological semigroup
$\mathscr{I}_\lambda^{n}$ is a trivial semigroup.
\end{theorem}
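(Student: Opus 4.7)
The plan is to derive the triviality of $B(\mathscr{I}_\lambda^n)$ by combining Theorem~\ref{theorem8} (which forces every continuous homomorphism into a compact semigroup to be constant) with the standard density property of Bohr compactifications supplied by Theorem~2.44 of \cite[Vol.~1]{CHK} (namely that $\beta(S)$ is dense in $B(S)$).

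First, I would unpack the setup: let $(\beta, B(\mathscr{I}_\lambda^n))$ denote the Bohr compactification of $\mathscr{I}_\lambda^n$. By definition $B(\mathscr{I}_\lambda^n)$ is a compact topological semigroup and $\beta\colon\mathscr{I}_\lambda^n\to B(\mathscr{I}_\lambda^n)$ is a continuous homomorphism. Applying Theorem~\ref{theorem8} directly to this $\beta$, I conclude that $\beta$ is annihilating, so there is a single element $e\in B(\mathscr{I}_\lambda^n)$ (automatically an idempotent) with $(\alpha)\beta=e$ for every $\alpha\in\mathscr{I}_\lambda^n$. Hence $(\mathscr{I}_\lambda^n)\beta=\{e\}$.

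Second, I would invoke Theorem~2.44 of \cite[Vol.~1]{CHK}, which says that the image $(\mathscr{I}_\lambda^n)\beta$ is dense in $B(\mathscr{I}_\lambda^n)$. Since all topological spaces in the paper are Hausdorff, the singleton $\{e\}$ is closed in $B(\mathscr{I}_\lambda^n)$, and therefore
\begin{equation*}
B(\mathscr{I}_\lambda^n)=\operatorname{cl}_{B(\mathscr{I}_\lambda^n)}((\mathscr{I}_\lambda^n)\beta)=\operatorname{cl}_{B(\mathscr{I}_\lambda^n)}(\{e\})=\{e\},
\end{equation*}
which is the trivial one-element semigroup, as claimed.

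There is essentially no obstacle here beyond correctly quoting the density statement from \cite{CHK}; the entire work has been done in Theorem~\ref{theorem7} and its compact-semigroup corollary Theorem~\ref{theorem8}. As a sanity check, one can bypass the density result altogether by a universal-property argument: the one-point semigroup $\{\ast\}$ together with the constant map $\mathscr{I}_\lambda^n\to\{\ast\}$ trivially satisfies the universal property of the Bohr compactification, because by Theorem~\ref{theorem8} every continuous homomorphism $g\colon\mathscr{I}_\lambda^n\to T$ into a compact semigroup $T$ is constant, hence factors uniquely through $\{\ast\}$; uniqueness of the Bohr compactification then identifies $B(\mathscr{I}_\lambda^n)$ with $\{\ast\}$.
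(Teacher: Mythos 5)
Your proof is correct and follows exactly the route the paper intends: the paper derives Theorem~\ref{theorem9} by combining Theorem~\ref{theorem8} (the Bohr map $\beta$ is annihilating) with the density of $(\mathscr{I}_\lambda^n)\beta$ in $B(\mathscr{I}_\lambda^n)$ from Theorem~2.44 of \cite[Vol.~1]{CHK}, which is precisely your main argument. Your alternative universal-property argument is a nice bonus but not part of the paper's reasoning.
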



\end{document}